\def\teich{{Teichm\"{u}ller}}
\def\mobi{{M{\"{o}}bius}}
\newtheorem{theorem}{Theorem}[section]
\newtheorem{lemma}[theorem]{Lemma}
\def\ifl{\iffalse }
\numberwithin{equation}{section}
\numberwithin{equation}{section}
\theoremstyle{remark}
\begin{document}
\baselineskip=1.3\baselineskip
\title[ On Carleson measure]
{On Carleson measures induced by Beltrami coefficients being compatible with Fuchsian groups}

\author{Huo Shengjin}
\address{Department of Mathematics, Tianjin Polytechnic University, Tianjin 300387, China} \email{huoshengjin@tjpu.edu.cn}
%
%
%


\thanks{This work was supported by the Science and Technology Development Fund of Tianjin Commission for Higher Education(Grant No.2017KJ095) and by the National Natural Science Foundation of China (Grant No.11401432..}
%
\subjclass[2010]{30F35, 30F60}
\keywords{Fuchsian group, Carleson measure, Ruelle's property.}
\begin{abstract}
 Suppose $\mu$ be a Beltrami coefficient on the unit disk,  which is compatible with a convex co-compact Fuchsian group $G$ of the second kind. In this paper we show that if
$\displaystyle\frac{|\mu|^{2}}{1-|z|^{2}}dxdy $ satisfies the Carleson condition on the infinite boundary boundary of the Dirichlet domain of $G$,
then $\displaystyle\frac{|\mu|^{2}}{1-|z|^{2}}dxdy$  is a Carleson measure on the unit disk.

\end{abstract}

\maketitle

\section{1 Introduction }

A Fuchsian group is a discrete {\mobi} group $G$ acting on the unit disk $\Delta$. A Fuchsian group is called the first kind if the limit is the entire circle and is called the second kind otherwise. A Fuchsian group $G$ is called cocompact if $\Delta/G$ is compact and is called convex cocompact if $G$ is finitely generated without parabolic elements. All cocompact groups are the first kind and convex cocompact groups minus cocompact groups are the second kind. A Fuchsian  group $G$ is of divergence type if $$\Sigma_{g\in G}(1-|g(0)|)=\infty~~\text{or}~~\sum_{g\in G}\exp(-\rho(0,g(0)))=\infty,$$
where $\rho(0,g(0))$ is the hyperbolic distance between $0$ and $g(0)$. Otherwise, we say that it is of convergence type.
All the second kind groups are of convergence type. For more details about Fuchsian groups, see \cite{Da}.

For $g$ in $G$, we denote by $\mathcal{D}_{z}(g)$ the closed hyperbolic half-plane containing $z$, bounded by the perpendicular bisector of the segment $[z, g(z)]_{h}$. The Dirichlet fundamental domain $\mathcal{F}_{z}(G)$ of $G$ centered at $z$ is the intersection of all the sets $\mathcal{D}_{z}(g)$ with $g$ in $G-\{id\}$. For simplicity, in this paper we use the notion $\mathcal{F}$ for the Dirichlet domain $\mathcal{F}_{z}(G)$ of $G$ centered at $z=0.$

A positive measure $\lambda$ defined in a simply connected domain $\Omega$ is called a Carleson measure if there exists some constant C which is independent  of $r$ such that, for all $0<r<diameter(\partial\Omega)$ and $z\in \partial\Omega$,
 $$\lambda(\Omega\cap D(z,r))\leq Cr. $$
 The infimum of all such $C$ is called the Carleson norm of $\lambda,$ denoted by
 $\parallel\lambda\parallel_{*}.$ Let $\Delta$ be the unit disk. In this paper, we mainly focus our attention on the case $\Omega=\Delta.$ We denote by $CM(\Delta)$ the set of all Carleson measures on $\Delta.$

For a positive measure $|\mu(z)|dxdy$, we say $\mu\in CM^{*}(\Delta)$ if and only if the measure $$\displaystyle\frac{|\mu|^{2}}{1-|z|^{2}}dxdy\in CM(\Delta).$$

The importance of the class $CM^{*}(\Delta)$ lies in the fact that it plays a crucial role in the theory of BMOA-{\teich} space, see \cite{AZ,Bi2, Cu, SW} etc.

For a Fuchsian group $G$, suppose $\mu(z)$ is a bounded measurable function on $\Delta$ which satisfies $$||\mu(z)||_{\infty}<1 ~~\text {and} ~~\mu(z)=\mu(g(z))\overline{g'(z)}/g'(z)$$ for every $g\in G$, then we say $\mu$ is a  $G$-compatible Beltrami coefficient (or complex dilatation). We denote by $M(G)$ the set of all $G$-compatible Beltrami coefficients. For a $G$-compatible Beltrami coefficient $\mu$, if the measure
$$\displaystyle\frac{|\mu|^{2}}{1-|z|^{2}}dxdy$$
is a Carleson measure on $\Delta,$ when the Carleson norm is small, then $f_{\mu}(\partial\Delta)$ is a rectifiable (chord-arc) curve, where $f_{\mu}$ is the quasiconformal mapping of the complex plane $\mathbb{C}$ with $i$, $1$ and $-i$ fixed, whose Beltrami coefficient equals to $\mu$ a.e. on the unit disk and equals to zero on the outside of the unit disk. This is essential for the proof of the convergence-type first-kind Fuchsian groups failing to have Bowen's property, see \cite{AZ1}. It is also the critical method to prove that the convergence-type Fuchsian groups  fail to have Ruelle's property, see \cite{HZ, HW}.

It is important to investigate in which condition the $G$-compatible Beltrami coefficients belong to $CM^{*}(\Delta).$ We call the intersection of $\overline{\mathcal{F}}$ with the unit circle $\partial\Delta$ the boundary at infinity of $\mathcal{F}$, denoted by $\mathcal{F}(\infty)$. In this paper, we have

\begin{theorem}\label{main}
Let $G$ be a convex cocompact Fuchsian group of the second kind  and $\mathcal{F}$ the Dirichlet  domain of $G$ centered at $0$. For $\mu\in M(G)$, if there exists a constant $C$ such that, for any $\xi\in \mathcal{F}(\infty)$(i.e. $\xi$ is in the free edges of $\mathcal{F}$) and for any $0<r<1$,
$$\iint_{B(\xi,r)}\displaystyle\frac{|\mu|^{2}\chi_{\mathcal{F}}}{1-|z|^{2}}dxdy \leq Cr.$$
Then $\mu$ is in $CM^{*}(\Delta),$ where $\chi_{\mathcal{F}}$ is the characteristic function of the Dirichlet domain $\mathcal{F}.$
\end{theorem}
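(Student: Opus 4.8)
The plan is to reduce the global Carleson condition on $\Delta$ to the assumed Carleson condition on the free edges $\mathcal{F}(\infty)$ by exploiting the $G$-invariance of the density $\frac{|\mu|^2}{1-|z|^2}\,dxdy$. The first observation is that because $\mu$ is $G$-compatible, the measure $d\lambda := \frac{|\mu|^2}{1-|z|^2}\,dxdy$ is invariant under $G$ (the conformal distortion factors $|g'|$ cancel between $|\mu(g(z))|^2|g'(z)|^2$ and the transformation of $\frac{1}{1-|z|^2}$, and $dxdy$ transforms by $|g'|^2$). Hence $\Delta$ is tiled by the translates $g(\mathcal{F})$, $g\in G$, and $d\lambda$ restricted to each tile is a copy of $d\lambda|_{\mathcal{F}}$ pulled back by $g$. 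To estimate $\lambda(D(z_0,r))$ for an arbitrary $z_0\in\partial\Delta$ and $0<r<1$, I would split into two regimes according to whether $z_0$ lies (close to) a free edge of some translate $g(\mathcal{F})$ or in the limit set $\Lambda(G)$.

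Since $G$ is convex cocompact of the second kind, its limit set $\Lambda(G)$ has Hausdorff dimension $\delta<1$, the convex core $\mathcal{C}(G)/G$ is compact, and $\mathcal{F}$ meets $\partial\Delta$ only in finitely many free arcs (with a bounded-geometry ``thick'' part near $\partial\Delta$). The key geometric input I would establish is: for a Carleson box $Q(z_0,r)=\Delta\cap D(z_0,r)$, the translates $g(\mathcal{F})$ that meet $Q(z_0,r)$ and reach ``deep'' into it (say within distance $r/2$ of $\partial\Delta$) do so through a free edge $g(e)$ with $e\subset\mathcal{F}(\infty)$, and the corresponding sub-boxes $Q(g(e)\cap\cdots)$ have radii $r_g$ with $\sum_g r_g \lesssim r$. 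This is a packing/shadow estimate: the relevant group elements correspond to disjoint (or boundedly-overlapping) intervals on $\partial\Delta$ lying in the complement of $\Lambda(G)$ inside the arc of length $\sim r$ around $z_0$, and the Gromov-hyperbolic geometry of $\Delta/G$ (Sullivan's shadow lemma, bounded geometry of the thick part) gives $\sum r_g\lesssim r$. On each such piece we invoke the hypothesis $\lambda(g(\mathcal{F})\cap D(\xi,r_g))=\lambda(\mathcal{F}\cap D(g^{-1}\xi, r_g'))\le C r_g'$ after transporting by $g^{-1}$; one must check the comparison of Euclidean radii under $g^{-1}$ near the boundary is uniformly bounded, which follows from the compactness of the convex core (the derivatives $|g'|$ are comparable on the part of $\mathcal{F}$ that matters, by Koebe distortion applied along the thick part). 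Summing, $\lambda(Q(z_0,r))\le \sum_g C r_g' \lesssim C r$.

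The remaining regime is $z_0\in\Lambda(G)$ (or very near it), where no single translate dominates; here I would iterate the box decomposition dyadically toward $z_0$, at each scale peeling off the free-edge contributions handled above, and use that $\Lambda(G)$ is porous (again because $\delta<1$ for a group of the second kind) so that a definite fraction of each dyadic subannulus lies in $\Delta\setminus\mathcal{C}(G)$ and is covered by finitely many translates of the thick part of $\mathcal{F}$. The geometric series in the scales then converges and yields the bound $\lambda(Q(z_0,r))\le C' r$ with $C'$ depending only on $C$, on $G$, and on $\|\mu\|_\infty$.

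The main obstacle I anticipate is the bookkeeping in the packing estimate $\sum_g r_g\lesssim r$: one must control simultaneously (i) how many translates $g(\mathcal{F})$ intrude into a fixed Carleson box and to what depth, and (ii) the Euclidean distortion of $g^{-1}$ on the sub-box, uniformly as $z_0\to\partial\Delta$. Both are ultimately consequences of the compactness of the convex core and the Ahlfors-regularity of $\Lambda(G)$, but assembling them into a clean sum requires care with the Koebe/Sullivan estimates near parabolic-free, bounded-geometry cusps of $\mathcal{F}$ along $\partial\Delta$. Everything else — the invariance of $d\lambda$, the tiling, and the transport of the hypothesis by group elements — is routine.
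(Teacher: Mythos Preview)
Your proposal contains a genuine error at its foundation: the measure $d\lambda=\frac{|\mu|^{2}}{1-|z|^{2}}\,dxdy$ is \emph{not} $G$-invariant. Under $w=g(z)$ one has $|\mu(g(z))|=|\mu(z)|$, $\frac{1}{1-|g(z)|^{2}}=\frac{1}{|g'(z)|(1-|z|^{2})}$, and $dudv=|g'(z)|^{2}\,dxdy$, so
\[
\iint_{g(A)}\frac{|\mu(w)|^{2}}{1-|w|^{2}}\,dudv=\iint_{A}\frac{|\mu(z)|^{2}}{1-|z|^{2}}\,|g'(z)|\,dxdy,
\]
with a leftover factor $|g'(z)|$. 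Your transport step $\lambda(g(\mathcal{F})\cap D(\xi,r_g))=\lambda(\mathcal{F}\cap D(g^{-1}\xi,r_g'))$ is therefore false as written. You seem aware something needs checking (you invoke Koebe distortion), but Koebe only tells you $|g'|$ is roughly constant on each $B_i\cap\mathcal{F}$; you still have to turn that constant into the length of $g(I_i)$ and sum. That is exactly where the paper's argument diverges from yours: it treats the weight $|g'|$ as an $H^{1}$ function and applies Zinsmeister's Carleson-measure characterization on chord-arc domains (Lemma~\ref{le2}) to get
\[
\iint_{B_i\cap\Delta}\frac{|\mu|^{2}}{1-|z|^{2}}|g'(z)|\,dxdy\le C_1\int_{\partial(B_i\cap\Delta)}|g'|\,ds=C_1\,\mathrm{length}\bigl(g(\partial(B_i\cap\Delta))\bigr).
\]
This single step replaces your entire Koebe/shadow-lemma apparatus.

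A second gap is the ``compact part''. The paper splits $\mu\chi_{\mathcal{F}}=\mu\chi_{\mathcal{F}_c}+\mu\chi_{B}$ with $\mathcal{F}_c$ compact, and handles $\sum_{g}\mu\chi_{g(\mathcal{F}_c)}$ by a separate lemma of Astala--Zinsmeister (Lemma~\ref{le1}): for a convergence-type group, $\{g(0)\}_{g\in G}$ is an interpolating sequence, whence any $\mu\in M(G)$ with $\mu\chi_{\mathcal F}$ compactly supported lies in $CM^{*}(\Delta)$. Your outline never isolates this contribution; the porosity/dyadic iteration you sketch for $z_0\in\Lambda(G)$ would have to reproduce this interpolation estimate, and you have not indicated how.

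Finally, the packing estimate you flag as the ``main obstacle'' is in fact trivial here: the free arcs $g(B_i\cap\partial\Delta)$ lie in the ordinary set and, being translates of a fundamental-domain arc, are pairwise disjoint (up to bounded overlap), so their lengths inside $B(\xi,r)\cap\partial\Delta$ sum to at most a constant times $r$. No shadow lemma or Ahlfors regularity of $\Lambda(G)$ is needed.
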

Notice that Theorem \ref{main} fails for the case of convex cocompact groups of the first kind (i.e. cocompact groups), since  Bowen \cite{Bo} showed that cocompact groups hold a rigidity property, now called Bowen's property, i.e. the image of the unit circle under any quasiconformal map whose Beltrami coefficient compatible with a cocompact group,  is either a circle or  has Hausdorff dimension bigger than 1. Hence for any $\mu$ being compatible with cocompact groups, the measure $\displaystyle \frac{|\mu|^{2}}{1-|z|^{2}}dxdy$ is not a Carleson measure.


By this theorem we see that the Carleson property of the measures which are compatible with the convex compact second-kind Fuchsian groups can be checked from the points  in the set $\mathcal{F}(\infty)$ i.e., the boundary at infinity of the Dirichlet domain $\mathcal{F}$.

\medskip
\noindent{\bf Notation.} In this paper $\chi_{A}$ always denotes the characteristic function of the domain $A.$

\section{ Some lemmas}

 The following lemma will be used several times in this paper. I give a short proof here.

\begin{lemma}\label{le}
  Let $\mu$ being a essentially bounded measurable function on $\Delta$. If the measure $\displaystyle\frac{|\mu|^{2}}{1-|z|^{2}}dxdy$ is in $CM(\Delta)$, then there exists a constant $C$  such that, for any $\xi\in\overline{\Delta} $ and all $0<r<2$,
$$\iint_{B(\xi,r)\cap\Delta}\displaystyle\frac{|\mu|^{2}}{1-|z|^{2}}dxdy \leq Cr,$$
where the constant $C$ depends only on the Carleson norm of the measure $\displaystyle\frac{|\mu|^{2}}{1-|z|^{2}}dxdy$ and the essential norm of $\mu$.
\end{lemma}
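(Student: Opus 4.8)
The plan is to reduce the statement about balls $B(\xi,r)$ centered at arbitrary points of $\overline\Delta$ to the definition of a Carleson measure, which only gives control over balls centered at boundary points $z\in\partial\Delta$ with radius $0<r<\mathrm{diameter}(\partial\Delta)=2$. The issue is therefore twofold: first, the center $\xi$ may lie in the interior of $\Delta$ rather than on $\partial\Delta$; second, even for a boundary center we must allow radii up to $2$, which the definition already covers (up to a harmless constant if one prefers to state it only for small $r$). So the only real work is moving the center to the boundary.

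First I would dispose of the easy range. If $r\ge 1$, then $B(\xi,r)\cap\Delta$ is covered by a bounded number of Carleson boxes of comparable size, or more simply one estimates the whole integral: writing $\nu=\frac{|\mu|^2}{1-|z|^2}\,dxdy$, for $r\in[1,2)$ we have $\nu(B(\xi,r)\cap\Delta)\le\nu(\Delta)$, and $\nu(\Delta)$ is finite because $\Delta$ is covered by, say, the two Carleson boxes $D(1,2)$ and $D(-1,2)$, each of $\nu$-mass at most $2\|\nu\|_*$; hence $\nu(B(\xi,r)\cap\Delta)\le 4\|\nu\|_* \le 4\|\nu\|_* r$. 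So from now on assume $0<r<1$.

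Next comes the main step: replacing an interior center by a boundary center. Given $\xi\in\overline\Delta$ with $0<r<1$, let $\xi^*=\xi/|\xi|\in\partial\Delta$ be the radial projection (if $\xi=0$ pick any boundary point). I claim $B(\xi,r)\cap\Delta\subset B(\xi^*,3r)\cap\Delta$ when $1-|\xi|\le r$: indeed every $z\in B(\xi,r)$ satisfies $|z-\xi^*|\le|z-\xi|+|\xi-\xi^*|<r+(1-|\xi|)\le 2r<3r$. In the remaining case $1-|\xi|>r$, the ball $B(\xi,r)$ sits well inside $\Delta$ at hyperbolic-bounded distance from its center, so on $B(\xi,r)$ one has $1-|z|^2\ge 1-|\xi|^2-2r\ge$ (a fixed multiple of) $(1-|\xi|)$, hence $\frac1{1-|z|^2}\le \frac{C}{1-|\xi|}<\frac{C}{r}$, and therefore $\nu(B(\xi,r)\cap\Delta)\le \frac{C}{r}\,\|\mu\|_\infty^2\,\pi r^2 = C\pi\|\mu\|_\infty^2\, r$. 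In the first case, the Carleson condition applied to the boundary ball gives $\nu(B(\xi^*,3r)\cap\Delta)\le 3\|\nu\|_*\, r$, and we are done by the inclusion above.

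The only mildly delicate point is the bookkeeping of constants and the case split on whether $r$ is small compared to $1-|\xi|$; there is no serious obstacle, since in the "interior" regime we simply use the trivial bound $|\mu|\le\|\mu\|_\infty$ together with the Euclidean area of the disk, and in the "near-boundary" regime we use the definition of Carleson measure after enlarging the radius by a universal factor. Collecting the three estimates ($r\ge 1$; $r<1$ with $1-|\xi|>r$; $r<1$ with $1-|\xi|\le r$) yields the claim with a constant $C$ depending only on $\|\nu\|_*$ and $\|\mu\|_\infty$, as asserted.
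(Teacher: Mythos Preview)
Your approach is essentially the same as the paper's: split into an ``interior'' case (use $\|\mu\|_\infty$ and the Euclidean area of the ball) and a ``near the boundary'' case (enlarge the radius and push the center to $\partial\Delta$). The paper splits at $\operatorname{dist}(\xi,\partial\Delta)\ge 2r$ versus $\le 2r$, while you split at $1-|\xi|>r$ versus $\le r$.

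That difference in threshold is not cosmetic; your interior estimate breaks as written. You claim that when $1-|\xi|>r$ one has $1-|z|^2\ge 1-|\xi|^2-2r\ge c\,(1-|\xi|)$ on $B(\xi,r)$, but take $|\xi|=\tfrac12$ and $r=0.49$: then $1-|\xi|=0.5>r$, yet $1-|\xi|^2-2r=0.75-0.98<0$, so the asserted lower bound fails. More generally, with only $1-|\xi|>r$ you may have $1-|z|$ as small as $1-|\xi|-r$, which can be arbitrarily small compared to $1-|\xi|$.

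The fix is exactly what the paper does: put the cut at $1-|\xi|\ge 2r$. Then for $z\in B(\xi,r)$ one gets $1-|z|\ge 1-|\xi|-r\ge r$, hence $1-|z|^2\ge r$ and $\frac{1}{1-|z|^2}\le \frac{1}{r}$, which yields $\nu(B(\xi,r))\le \pi\|\mu\|_\infty^2\,r$. In the complementary case $1-|\xi|\le 2r$ your inclusion argument still works (now $|z-\xi^*|<r+2r=3r$), and the rest of your write-up is fine.
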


\begin{proof}
We first choose $0<r<2$ and fix it. For any $\xi\in \overline{\Delta}$, if $\xi\in\partial\Delta$, there is nothing need to prove. We suppose
 $\xi\in \Delta.$ If  $dist(\xi, \partial\Delta)\geq 2r$ (this case only happens when $0<r<0.5$), where $dist(\cdot,\cdot)$ denotes the Euclidean distance. Then we have
 $$\iint_{B(\xi, r)}\displaystyle\frac{|\mu|^{2}}{1-|z|^{2}}dxdy\leq \displaystyle\frac{||\mu||_{\infty}\pi r^{2}}{1-|1-r|^{2}}
 =\displaystyle\frac{||\mu||_{\infty}\pi r}{2-r}\leq \pi||\mu||_{\infty} r.$$

 For the case $dist(\xi, \partial\Delta)\leq 2r,$ we can choose a point $\eta\in\partial\Delta$ such that $dist (\eta,\xi)<2r.$ Then we have
 $B(\xi,r)\subset B(\eta, 4r)$ and
 \begin{equation}
 \iint_{B(\xi, r)\cap \Delta}\displaystyle\frac{|\mu|^{2}}{1-|z|^{2}}dxdy\leq
 \iint_{B(\eta, 4r)\cap\Delta}\displaystyle\frac{|\mu|^{2}}{1-|z|^{2}}dxdy\leq 4C^{*}r,
 \end{equation}
 where $C^{*}$ is the Carleson norm of the measure $\displaystyle\frac{|\mu|^{2}}{1-|z|^{2}}dxdy.$

Hence we let $C=\max\{\pi||\mu||_{\infty}, 4C^{*}\}$ and the lemma follows.
\end{proof}

\noindent{\bf Remark.} By this lemma we see that for any simply connected domain $\Omega\subset \Delta$, If $\displaystyle\frac{|\mu|^{2}}{1-|z|^{2}}dxdy$ is a Carleson measure on $\Delta$, then it is also a Carleson measure on $\Omega.$

In order to prove Theorem \ref{main}, we will  need the following lemma which
 essentially belongs to Astala and Zinsmeister, see \cite{AZ}, or \cite{AZ1}.
\begin{lemma}\label{le1}
For a convergence-type Fuchsian group $G$ and $\mu$ in $M(G)$, if there exists  a $0<t<1$ such that the support set of $\mu\chi_{\mathcal{F}}$ is contained in the ball $B(0, t)$ with center $0$ and radius $t.$ Then $\mu$ is in $CM^{*}(\Delta).$
\end{lemma}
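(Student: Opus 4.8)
The plan is to exploit the $G$-compatibility of $\mu$ together with the hypothesis that $\mu\chi_{\mathcal{F}}$ is supported in the fixed ball $B(0,t)$. First I would observe that because $\mu$ is $G$-compatible and $|\mu|$ is invariant under the action of $G$ (since $|\overline{g'(z)}/g'(z)|=1$), the support of $\mu$ on the whole disk is contained in $\bigcup_{g\in G}g(B(0,t))$. Thus the measure $d\nu:=\frac{|\mu|^2}{1-|z|^2}\,dx\,dy$ splits as a sum over the $G$-orbit of the fundamental piece, and to verify the Carleson condition it suffices to control $\nu(B(\zeta,r)\cap\Delta)$ for $\zeta\in\partial\Delta$ and $r$ small. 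The key geometric point is that each translate $g(B(0,t))$ is a hyperbolic ball of fixed hyperbolic radius, hence a Euclidean disk of Euclidean diameter comparable to $1-|g(0)|$ and at Euclidean distance comparable to $1-|g(0)|$ from $\partial\Delta$; on such a translate $1-|z|^2\gtrsim (1-|g(0)|)(1-t)$, so $\nu\big(g(B(0,t))\big)\lesssim \frac{\|\mu\|_\infty^2\,(1-|g(0)|)^2}{(1-|g(0)|)(1-t)}=\frac{\|\mu\|_\infty^2}{1-t}(1-|g(0)|)$.

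Next I would reduce the Carleson estimate to a counting statement. Fix $\zeta\in\partial\Delta$ and $0<r<1$; only those translates $g$ with $g(B(0,t))\cap B(\zeta,r)\neq\emptyset$ contribute, and for those $1-|g(0)|\lesssim r$ and the Euclidean balls $g(B(0,t))$ are essentially disjoint (their hyperbolic centers $g(0)$ are hyperbolically separated by a uniform constant, this being where discreteness enters). Writing $T_r(\zeta)$ for this set of group elements, we get
\begin{equation}
\nu(B(\zeta,r)\cap\Delta)\;\lesssim\;\frac{\|\mu\|_\infty^2}{1-t}\sum_{g\in T_r(\zeta)}(1-|g(0)|).
\end{equation}
So it remains to show $\sum_{g\in T_r(\zeta)}(1-|g(0)|)\lesssim r$ with constant independent of $\zeta$ and $r$. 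This is exactly the statement that the orbit counting measure $\sum_{g\in G}(1-|g(0)|)\delta_{g(0)}$ (or rather its horocyclic version) is a Carleson measure — equivalently, $G$ being of convergence type, the Poincaré series $\sum_g(1-|g(0)|)$ converges, and a standard pigeonholing of the orbit into Whitney boxes near $\partial\Delta$ combined with the fact that the orbit points in a box $B(\zeta,r)$ lie within bounded hyperbolic distance of the geodesic ray to $\zeta$ gives the linear bound. This is the part attributed to Astala--Zinsmeister, and I would cite \cite{AZ, AZ1} for the precise orbital-counting lemma rather than reprove it.

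The main obstacle, and the one requiring care, is the passage from ``convergence type'' to the uniform-in-$\zeta$ linear growth bound $\sum_{g\in T_r(\zeta)}(1-|g(0)|)\lesssim r$: mere convergence of $\sum_g(1-|g(0)|)$ gives a bound with constant $\sum_g(1-|g(0)|)$ but only for $r$ of order $1$, and one needs the geometric input that the orbit is ``uniformly spread'' — i.e. that in any Carleson box $B(\zeta,r)\cap\Delta$ the sum of $1-|g(0)|$ over orbit points inside is $O(r)$. For a general convergence-type group this is a genuine theorem (it is essentially the assertion that $\omega$, the harmonic measure / Patterson--Sullivan type estimate, behaves well), and the cleanest route is to quote the Astala--Zinsmeister result directly; once that orbital Carleson estimate is in hand, the rest of the argument is the elementary translate-by-translate volume computation sketched above, finishing with $\|\nu\|_*\lesssim \frac{\|\mu\|_\infty^2}{1-t}$ times the orbital Carleson constant.
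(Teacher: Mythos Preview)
Your reduction is exactly the paper's: decompose $\nu$ over the $G$-orbit of $B(0,t)$, use that each $g(B(0,t))$ has Euclidean radius $\asymp 1-|g(0)|$ so that $\nu(g(B(0,t)))\lesssim 1-|g(0)|$, and then reduce to showing that the orbit measure $\sum_{g}(1-|g(0)|)\,\delta_{g(0)}$ is Carleson. Where the paper differs is that it does \emph{not} simply cite this orbital Carleson estimate from \cite{AZ,AZ1} but supplies a short self-contained proof: by Carleson's theorem \cite{Ca}, the Carleson property of $\sum_{g}(1-|g(0)|^2)\,\delta_{g(0)}$ together with the uniform hyperbolic separation of $\{g(0)\}$ is equivalent to the Blaschke condition $\inf_{i}\prod_{j\neq i}\bigl|\frac{g_i(0)-g_j(0)}{1-\overline{g_i(0)}g_j(0)}\bigr|>0$. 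The key observation is that group invariance of the hyperbolic metric forces all of these infinite products to be \emph{equal} (writing $g_k=\gamma g_i$ one sees the product centered at $g_k(0)$ coincides with the one centered at $g_i(0)$), so it suffices to evaluate the single product at $g_i(0)=0$, namely $\prod_{g\neq id}|g(0)|=\exp\bigl(\sum\log|g(0)|\bigr)\geq\exp\bigl(-C\sum(1-|g(0)|)\bigr)>0$, which is positive precisely because $G$ is of convergence type. This argument is worth internalizing; it is considerably cleaner than the Whitney-box/geodesic-ray heuristic you sketch, which is in fact incorrect as stated --- orbit points in a Carleson box $B(\zeta,r)\cap\Delta$ need \emph{not} lie within bounded hyperbolic distance of the geodesic ray to $\zeta$, so that route would not close without further input.
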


 For the readers to see more clearly about the property of $\mu,$ we give the detail of proof of this lemma here.

\begin{proof}
  Recall that a sequence $\{z_{j}\}$ is called an interpolating sequence of $\Delta$ if $$(i) ~~\exists \delta>0, ~\rho(z_{j}, z_{k})\geq\delta~~if ~~j\neq k; $$
 $$(ii)~~\sum (1-|z_{i}|^{2})\delta_{z_{i}}\in CM(\Delta),$$
 where $\delta_{z}$ stands for the Dirac mass at $z$.

We first show that the sequence $\{g(0)\}_{g\in G}$
is an interpolating sequence of the unit disk $\Delta$.

The sequence $\{g\}_{g\in G}$ satisfies the property $(i)$ of the interpolating sequence  immediately from the action of Fuchsian group being discrete.

For the property $(ii)$, by a result due to Carleson \cite{Ca}, we know that
$$\sum_{g\in G} (1-|g|^{2})\delta_{g}\in CM(\Delta) \eqno(3.1)$$
is equivalent to the Blaschke product $$\inf_{g_{i}}\prod_{g_{j}\in G, g_{j}\neq g_{i}}\big |\displaystyle\frac{g_{i}-g_{j}}{1-\bar{g_{i}}g_{j}}\big|
\geq\delta>0,~~j=1,2,\cdot\cdot\cdot. \eqno(3.2)$$
 In order to show (3.2), it is enough to prove that for any $g_{i}\neq g_{k},$
$$\prod_{g_{j}\in G, g_{j}\neq g_{i}}\big |\displaystyle\frac{g_{i}-g_{j}}{1-\bar{g_{i}}g_{j}}\big|\equiv\prod_{g_{j}, g_{k}\in G,j\neq k}\big |\displaystyle\frac{g_{k}-g_{j}}{1-\bar{g_{k}}g_{j}}\big|,~~j=1,2,\cdot\cdot\cdot. \eqno(3.3)$$
Note that
$$\big |\displaystyle\frac{g_{i}-g_{j}}{1-\bar{g_{i}}g_{j}}\big|=\tanh 2\rho(g_{i},g_{j}),$$
where $\rho(g_{i},g_{j})$ denotes the hyperbolic distance between $g_{i}$ and $g_{j}.$
Similarly£¬
$$\big |\displaystyle\frac{g_{k}-g_{j}}{1-\bar{g_{k}}g_{j}}\big|=\tanh 2\rho(g_{k},g_{j}).$$
Let $\gamma=g_{k}\circ g_{i}^{-1},$ we have $g_{k}=\gamma\circ g_{i}$ and
\begin{eqnarray*}
&&\prod_{g_{j}\in G, g_{j}\neq g_{k}}\big |\displaystyle\frac{g_{k}-g_{j}}{1-\bar{g_{k}}g_{j}}\big|=
\prod_{g_{j}\in G, g_{j}\neq g_{k}}\tanh(2\rho(g_{k}, g_{j}))\\
&=&
\prod_{g_{j}\in G, g_{j}\neq g_{k}}\tanh(2\rho(\gamma\circ g_{i}, g_{j}))
=\prod_{g_{j}\in G,g_{j}\neq g_{k}}\tanh(2\rho( g_{i}, \gamma^{-1}\circ g_{j}))\\
&=&\prod_{ g_{j}\in G,g_{j}\neq g_{i}}\big |\displaystyle\frac{g_{i}-g_{j}}{1-\bar{g_{i}}g_{j}}\big|.
\end{eqnarray*}

Let $g_{i}(0)=0$ and in this case
\begin{eqnarray*}
&&\prod_{g_{j}\in G,g_{j}\neq g_{i}}\big |\displaystyle\frac{g_{i}-g_{j}}{1-\bar{g_{i}}g_{j}}\big|=
\prod_{g_{j}\neq 0} |g_{j}|\\
&=&\exp{(\sum_{g_{j}\neq 0} \ln|g_{j}|)}\geq\exp {(C\sum_{g_{j}\neq 0}(1-|g_{j}|))},
\end{eqnarray*}
where $C$ is some universal constant.

By the definition of the convergence type group we know that the sequence $\{g\}_{g\in G}$ is an interpolating sequence.

We continue to prove this lemma.
Suppose the support set of $\mu\chi_{\mathcal{F}}$, denoted by $Supp(\mu_{\mathcal{F}})$ which is contained in the ball $B(0,t)$.
For any $\xi\in \partial\Delta$ and $0<r\leq 2$, we have
\begin{eqnarray*}
&&\iint_{\Delta\cap B(\xi,r)}\displaystyle\frac{|\mu(z)|^{2}}{1-|z|^{2}}dxdy\\
&=&\sum_{g\in G}\iint_{g(B(0,t))\cap B(\xi,r)}\displaystyle\frac{|\mu(z)|^{2}}{1-|z|^{2}}dxdy\\
&=&\sum_{g\in G}\iint_{g(B(0,t))}\displaystyle\frac{|\mu(z)|^{2}}{1-|z|^{2}}\chi_{B(\xi,r)}dxdy\\
&\leq&\sum_{g\in G}\parallel\mu\parallel_{\infty}^{2}\iint_{g(B(0,t))}
\displaystyle\frac{1}{1-|z|^{2}}\chi_{B(\xi,r)}dxdy.
\end{eqnarray*}

It is easy to see that the hyperbolic radius $t_{\rho}$ of the Euclidean disk $B(0,t)$ is $\ln\displaystyle\frac{1+t}{1-t}$ . Hence for any $g\in G$, the disk $g(B(0,t))$ is a hyperbolic disk with center $g(0)$ and hyperbolic radius $t_{\rho}.$
By some simple calculation or by \cite{Be} we know that the disk $g(B(0,t))$ is contained in the Euclidean disk $B(g(0), R_{g}) $, where the radius $R_{g}$  is equal to $$\displaystyle\frac{(1+|g(0)|)(1-e^{t_{\rho}})(1-|g(0)|)}{(1+|g(0)|)+e^{t_{\rho}}(1-|g(0)|)}\leq C(1-|g(0)|),$$
where C is some constant depending only on $t$.

Combine the above discussion we have
\begin{align*}
&\iint_{\Delta\cap B(\xi,r)}\displaystyle\frac{|\mu(z)|^{2}}{1-|z|^{2}}dxdy\\
&\leq
\sum_{g(0)\in B(\xi, r)}\displaystyle\frac{||\mu||_{\infty}\pi R_{g}^{2}}{1-|1-R_{g}|^{2}}\\
&\leq C'\sum_{g(0)\in B(\xi, r)}(1-|g(0)|)\leq C^{*}r,
\end{align*}
where the constant $C^{*}$ depends only on $C'$ and the Carleson norm of the measure $\sum_{g\in G}(1-|g(0)|)\delta_{g(0)}.$
 Hence the  lemma holds.
\end{proof}

\noindent{\bf Remark:} In \cite{Bi2}, Bishop used the norm property of Schwarzian derivative of holomorphic function under hyperbolic metric to give another proof of Lemma \ref{le1} for the case the Beltrami coefficient $\mu $ supported on a compact subset of the surface $\Delta/G.$

A Jordan curve $\gamma$ is said to be a chord-arc curve if there exists a constant $C$ such that for any two points $\xi_{1}$, $\xi_{2}\in\gamma$, the length of the arc $\gamma_{\xi_{1},\xi_{2}}$  satisfies
$$length(\gamma_{\xi_{1},\xi_{2}})\leq Cd(\xi_{1}, \xi_{2}),$$
where $\gamma_{\xi_{1},\xi_{2}}$ is the shorter arc of $\gamma$ with endpoints
$\xi_{1}, \xi_{2}$ and $d(\xi_{1}, \xi_{2})$ means the Euclidean distance between $\xi_{1}$ and $\xi_{2}$.

A result from \cite{Z} says that

\begin{lemma}\label{le2}(\cite{Z})
Let $\Omega$ be a chord-arc domain. Then the following are equivalent:

(a) $d\nu$ is a Carleson measure for $\Omega.$

(b) For $0<p<\infty,$ and $f\in H^{p}(\Omega),$
$$\iint_{\Omega}|f|^{p}dv\leq C\int_{\partial \Omega}|f|^{p}ds,$$
where $H^{p}(\Omega)=\{f: f \text{is analytic on }\Omega \text{ and } \int_{\partial \Omega}|f|^{p}ds<\infty \}$ and the constant $C$ depends only on the the Carleson norm of $d\nu.$
\end{lemma}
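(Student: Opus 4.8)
The plan is to transport the whole statement to the unit disk by the Riemann map, where the corresponding fact is the classical Carleson embedding theorem, and to let the chord--arc hypothesis enter exactly at the two geometric points where the boundary of $\Omega$ must be controlled. Let $\Phi\colon\Delta\to\Omega$ be a conformal map. Because $\partial\Omega$ is a chord--arc curve it is a rectifiable quasicircle, so $\Phi$ extends to a homeomorphism of the closures, $\Omega$ is a Smirnov domain, $\Phi'$ has non-vanishing nontangential boundary values almost everywhere with $\int_{\partial\Delta}|\Phi'|\,|d\zeta|=\mathrm{length}(\partial\Omega)<\infty$, and---this is the decisive analytic consequence of the chord--arc condition---$|\Phi'|$ is an $A_\infty$ weight on $\partial\Delta$, equivalently $\log|\Phi'|\in\mathrm{BMO}$, equivalently harmonic measure on $\partial\Omega$ and arc length $ds$ are mutually absolutely continuous with $A_\infty$ Radon--Nikodym derivative. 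Fixing a single-valued branch of $(\Phi')^{1/p}$ on $\Delta$ and writing $F=(f\circ\Phi)\,(\Phi')^{1/p}$, the conformal invariance of Hardy spaces on a Smirnov domain yields
\[
\int_{\partial\Omega}|f|^{p}\,ds=\int_{\partial\Delta}|F|^{p}\,|d\zeta|=\|F\|_{H^{p}(\Delta)}^{p},
\qquad
\iint_{\Omega}|f|^{p}\,d\nu=\iint_{\Delta}|F|^{p}\,d\widehat\nu,
\]
where $d\widehat\nu=|\Phi'|^{-1}\,d(\Phi^{*}\nu)$ and $\Phi^{*}\nu$ denotes the pull-back of $\nu$ under $\Phi$. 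Hence $(b)$ is equivalent to the inequality $\iint_{\Delta}|F|^{p}\,d\widehat\nu\le C\|F\|_{H^{p}(\Delta)}^{p}$ for every $F\in H^{p}(\Delta)$, and by Carleson's theorem on the disk this holds if and only if $\widehat\nu\in CM(\Delta)$. The lemma is thereby reduced to the purely geometric equivalence $\nu\in CM(\Omega)\iff\widehat\nu\in CM(\Delta)$.

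To establish this equivalence I would compare Carleson boxes on the two sides scale by scale. Fix a boundary arc $I\subset\partial\Delta$ with endpoint $\zeta_0$ and $|I|=\ell$, and set $r=\mathrm{length}(\Phi(I))$. The Koebe distortion theorem gives $|\Phi'(z)|\,(1-|z|^{2})\asymp\mathrm{dist}(\Phi(z),\partial\Omega)$, so $\Phi$ carries the Carleson box $S(I)$ into a neighbourhood of the boundary arc $\Phi(I)$, while the chord--arc regularity of $\partial\Omega$ forces $\mathrm{length}(\Phi(I))\asymp\mathrm{diam}(\Phi(I))\asymp r$; consequently $\Phi(S(I))$ is comparable to the Carleson region $\Omega\cap D(\Phi(\zeta_0),r)$. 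Changing variables,
\[
\widehat\nu\big(S(I)\big)=\iint_{\Phi(S(I))}\big|\Phi'\big(\Phi^{-1}(w)\big)\big|^{-1}\,d\nu(w),
\]
and near the top of $S(I)$ the Koebe estimate gives $|\Phi'|\asymp r/\ell$, so the weight $|\Phi'|^{-1}$ has size $\asymp \ell/r$ there; the $A_\infty$ (John--Nirenberg) control of $\log|\Phi'|$ is what confines its fluctuation over the remainder of the box. Feeding the two Carleson inequalities $\nu(\Omega\cap D(\Phi(\zeta_0),r))\le C r$ and $\widehat\nu(S(I))\le C\ell$ into one another then yields the desired equivalence, and with it the lemma.

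An essentially equivalent, more intrinsic route avoids Carleson's theorem as a black box and proves the two implications directly over $\Omega$: for $(a)\Rightarrow(b)$ one writes $\iint_\Omega|f|^{p}\,d\nu=p\int_0^{\infty}\lambda^{p-1}\nu(\{|f|>\lambda\})\,d\lambda$, covers the superlevel set $\{|f|>\lambda\}$ by Carleson regions sitting over the boundary set $\{Nf>\lambda\}$ (where $Nf$ is the nontangential maximal function), uses the Carleson condition together with Ahlfors regularity to bound $\nu(\{|f|>\lambda\})\lesssim s(\{Nf>\lambda\})$, and finishes with the $L^{p}(ds)$-boundedness $\|Nf\|_{L^{p}(ds)}\lesssim\|f\|_{H^{p}(\Omega)}$; for $(b)\Rightarrow(a)$ one tests the inequality against functions $f$, pulled back from the disk reproducing kernels, that satisfy $|f|\gtrsim r^{-1/p}$ on $\Omega\cap D(\xi,r)$ and $\int_{\partial\Omega}|f|^{p}\,ds\lesssim1$.

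The step I expect to be the main obstacle is the same in both routes, and it is precisely where the chord--arc hypothesis (as opposed to mere rectifiability or the quasicircle property) is indispensable: controlling the conformal distortion of Carleson boxes and the fluctuation of $|\Phi'|$, equivalently proving the $L^{p}(ds)$ bound for the nontangential maximal function. Both rest on the $A_\infty$ equivalence of harmonic measure and arc length on $\partial\Omega$; on a general Jordan domain these two measures need not be mutually $A_\infty$, the two notions of Carleson measure decouple, and the equivalence fails. Making this distortion/maximal-function estimate quantitative, with constants depending only on the chord--arc constant of $\Omega$, is the technical heart of the proof.
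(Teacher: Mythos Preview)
The paper does not supply its own proof of this lemma: it is quoted verbatim from Zinsmeister's paper \cite{Z}, with only the remark that Carleson established the disk case and Zinsmeister extended it to chord--arc domains. So there is no argument in the paper to compare against.

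That said, your outline is the standard route and matches what one finds in the literature: conformally transplant to the disk, use that a chord--arc boundary forces $|\Phi'|\in A_\infty$ on $\partial\Delta$ (equivalently, harmonic measure and arc length are $A_\infty$-equivalent), and then invoke Carleson's embedding theorem. Your identification of the weighted pull-back measure $d\widehat\nu=|\Phi'|^{-1}\,d(\Phi^{*}\nu)$ and the reduction to $\widehat\nu\in CM(\Delta)$ are correct, and you have correctly isolated the one nontrivial step---the quantitative distortion control on Carleson boxes, which is exactly where the chord--arc hypothesis (and not merely rectifiability or the quasicircle property) is used. Your alternative ``intrinsic'' route via the nontangential maximal function is also standard and equivalent. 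Since the paper treats the lemma as a black box, your sketch already goes well beyond what the paper provides; filling in the $A_\infty$ distortion estimate with constants depending only on the chord--arc constant is indeed the technical core, and is carried out in \cite{Z}.
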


\noindent{\bf Remark.} Lemma \ref{le2} was first given by Carleson [\cite{Ga},Theorem 3.9, P.{61}] when $\Omega$ is the upper half plane. Zinsmeister proved that Carleson's theorem remains true for chord-arc domains, see \cite{Z}.

By the preparatory work we have done, it is time to give the proof of Theorem {\ref{main}.}
\section{Proof of Theorem \ref{main}}
\begin{proof}
Let $G$ be a second-kind convex cocompact group and $\mathcal{F}$ be the Dirichlet domain of $G$ with center $0.$  Let $\mu$ be an element in $M(G).$
The intersection of the closure of $\mathcal{F}$ with $\partial\Delta$ contains finitely many intervals which are called free edges of $\mathcal{F}$, denoted by $I_{1},$ $I_{2},$ $\cdot\cdot\cdot$$I_{n}.$

For any $1\leq i\leq n,$ let $q_{i,1}, q_{i,2}$ be the endpoints of $I_{i}$. It is well known that both $q_{i,1}, q_{i,2}$ do not belong to the limit set. Both sides of $q_{i,j}$ ($j=1,$ or $2$) are free sides of Dirichlet domains with different centers.

By the statement of the theorem we know  there exists a constant $C$ such that for any $1\leq i\leq n,$ we can choose a ball $B_{i}$ such that $B_{i}\cap \partial\Delta$ contains no limit points of $G$ and $I_{i}\subset B_{i}\cap \partial\Delta$ and for any point $\xi\in I_{i}$ and $0<r<2,$
$$\iint_{B(\xi, r)\cap\Delta}\displaystyle
 \frac{|\mu(z)|^{2}}{1-|z|^{2}}\chi_{B_{i}\cap\Delta}dxdy\leq Cr,$$
 furthermore, the set $\overline{\mathcal{F}}-\bigcup_{i=1}^{n}(B_{i}\cap \mathcal{F})$ is compact, denoted by $\mathcal{F}_{c}.$

 By Lemma \ref{le} we know that
 the measure $$\displaystyle
 \frac{|\mu(z)|^{2}}{1-|z|^{2}}dxdy$$
  is a Carleson measure
 on the domain $B_{i}\cap \mathcal{F}.$

 We divide  $\mu$ into two parts.   Let
  $$\mu=\sum_{g\in G}\mu\chi_{g(\mathcal{F}_{c})}+\sum_{g\in G}\mu\chi_{g(B)},$$
 where
 $B=\bigcup_{i=1}^{n}(B_{i}\cap \mathcal{F}).$

By Lemma\ref{le1}, we know that the measure $\sum_{g\in G}\mu\chi_{g(\mathcal{F}_{c})}$ is a Carleson measure on $\Delta.$ In the following we only need to show that $\sum_{g\in G}\mu\chi_{B}$
is also a Carleson measure. For the simplified the notion, we may suppose
$\mu=\sum_{g\in G}\mu\chi_{B}.$

Let $\xi$ be an arbitrary point of $\partial\Delta$ and  $r$ a positive real number less than $2$. In the following we will find a positive constant $C^{*}$ which does not depend on $\xi$ and $r$ such that
$$\iint_{B(\xi,r)\cap\Delta}\displaystyle\frac{|\mu|^{2}}{1-|z|^{2}}dxdy \leq C^{*}r. \eqno(3.1)$$

We first consider the following special case. If there exists $g\in G$ such that
$g(B(\xi,r)\cap\Delta)\subset \mathcal{F}.$ By Lemma \ref{le} we know that $\displaystyle\frac{|\mu|^{2}}{1-|z|^{2}}dxdy$  is a Carleson measure on the domain $g(B(\xi,r)\cap\Delta)$. Then we have
\begin{equation*}
\begin{aligned}
 \iint_{B(\xi, r)\cap \Delta}\displaystyle\frac{|\mu(w)|^{2}}{1-|w|^{2}}dudv&\leq
 \iint_{g(B(\xi, r)\cap\Delta)}\displaystyle
 \frac{|\mu(g^{-1}(z))|^{2}}{1-|g^{-1}(z)|^{2}}|(g^{-1})'(z)|^{2}dxdy\\
 &= \iint_{g(B(\xi, r)\cap\Delta)}\displaystyle
 \frac{|\mu(g^{-1}(z))\frac{\overline{(g^{-1})'(z)}}
 {(g^{-1})'(z)}|^{2}}{1-|g^{-1}(z)|^{2}}|(g^{-1})'(z)|^{2}dxdy\\
 &=\iint_{g(B(\xi, r)\cap\Delta)}\displaystyle
 \frac{|\mu(z)|^{2}}{1-|z|^{2}}|(g^{-1})'(z)|dxdy.
 \end{aligned}
 \label{3.2}
 \end{equation*}
Since $g$ is a {\mobi} transformation, the domain $g(B(\xi,r)\cap\Delta)$ is a chord-arc domain. By Lemma \ref{le2}, we have
\begin{align*}
&\iint_{g(B(\xi, r)\cap\Delta)}\displaystyle
 \frac{|\mu(z)|^{2}}{1-|z|^{2}}|(g^{-1})'(z)|dxdy\\
 &\leq C_{1}\int_{\partial g(B(\xi,r)\cap\Delta)}|(g^{-1})'(z)|ds\\
 &= \int_{\partial (B(\xi,r)\cap \Delta)}ds\leq 2\pi C_{1}r,
\end{align*}
where the constant $C_{1}$ depends only on the constant $C$ in the statement of the Theorem \ref{main}. Hence we have
$$\iint_{B(\xi, r)\cap \Delta}\displaystyle\frac{|\mu(w)|^{2}}{1-|w|^{2}}dudv\leq2\pi C_{1}r.\eqno(3.2)$$

 By the above discuss of the special case,
we easily get that
the measure $\displaystyle\frac{|\mu(z)|^{2}}{1-|z|^{2}}dxdy$ is a Carleson measure on $B_{i}\cap \Delta$ for any $1\leq i\leq n.$

Now we consider the general case. Let $G^{*}$ contains all the elements $g$ in $G$ such that $g(B)\cap B(\xi, r)\neq\emptyset.$ For $g\in G^{*}$ there are at most three possibilities as follows:

(a) there exist $1\leq i\leq n, $ $g(B_{i}\cap \mathcal{F})\subset B(\xi, r);$

(b) there exists $ 1\leq i\leq n, $ $g(B_{i}\cap \mathcal{F})\subset B(\xi, r)\neq\emptyset$ and $g(I_{i})\subset B(\xi, r)\cap\partial\Delta;$

(c)there exist $ 1\leq i\leq n, $ $g(B_{i}\cap \mathcal{F})\subset B(\xi, r)\neq\emptyset$ and $g(I_{i})\cap B(\xi, r)\cap\partial\Delta\neq\emptyset.$

For case (a),  we have

\begin{align*}
\iint_{g(B_{i}\cap \mathcal{F})}\displaystyle
 \frac{|\mu(w)|^{2}}{1-|w|^{2}}|dudv&\leq \iint_{g(B_{i}\cap \Delta)}\displaystyle
 \frac{|\mu(w)|^{2}}{1-|w|^{2}}|dudv\\
 &= \iint_{B_{i}\cap \Delta}\displaystyle
 \frac{|\mu(g(z))\displaystyle\frac{\overline{g'(z)}}
 {g'(z)}|^{2}}{1-|g(z)|^{2}}||g'(z)|^{2}dxdy\\
 &=\iint_{B_{i}\cap\Delta}\displaystyle
 \frac{|\mu(z)|^{2}}{1-|z|^{2}}||g'(z)|dxdy\\
 &\leq C_{1}\int_{\partial(B_{i}\cap\Delta)}|g'(z)|ds= C_{1}\int_{\partial g(B_{i}\cap\Delta)}ds\\
 &\leq C_{1}\pi length( g(B_{i}\cap\partial\Delta)),
\end{align*}
where the second inequality of above holds is by Lemma \ref{le2} and $C_{1}$ depend only on the Carleson norm of $\displaystyle
 \frac{|\mu(z)|^{2}}{1-|z|^{2}}|dxdy$ on $B_{i}\cap\Delta.$

For case (b) we have
\begin{align*}
&\iint_{g(B_{i}\cap \mathcal{F})\cap B(\xi,r)}\displaystyle
 \frac{|\mu(w)|^{2}}{1-|w|^{2}}|dudv\\
 &\leq\iint_{g(B_{i}\cap \Delta)\cap B(\xi,r)}\displaystyle
 \frac{|\mu(w)|^{2}}{1-|w|^{2}}|dudv\leq\pi C_{1}length(B_{i}\cap\partial\Delta).
\end{align*}
For case (c), notice that $g(B_{i}\cap\Delta)\cap B(\xi,r)$ is a triangle with three circle-arc and the angle corresponding to the side $g(B_{i}\cap\partial\Delta)\cap B(\xi,r)$ is bigger than some constant, we have

$$length(\partial(g(B_{i}\cap\Delta)\cap B(\xi,r)))\leq C_{2}length(g(B_{i}\cap\partial\Delta)\cap B(\xi,r)).$$
where the constant $C_{2}$ depends only on the Carleson norm of
$\frac{|\mu(z)|^{2}}{1-|z|^{2}}dxdy$ on $B_{i}\cap\Delta$ and the angle between $\partial B_{i}$ and $\partial\Delta$.

By the similar discuss as case(a) we have
$$\iint_{g(B_{i}\cap \mathcal{F})}\displaystyle
 \frac{|\mu(w)|^{2}}{1-|w|^{2}}dudv\leq \pi C_{2} length(g(B_{i}\cap\partial\Delta)\cap B(\xi,r)).$$

Since for every $1\leq i\leq n$, the arc $B_{i}\cap \partial\Delta$ does not contain the limit points of $G$. Hence for $g_{1}$, $g_{2}\in G^{*}$ if
$g_{1}(B_{i})\cap B(\xi, r)\neq\emptyset$ and $g_{2}(B_{i})\cap B(\xi, r)\neq\emptyset$, the images of $B_{i}\cap \partial\Delta$ under maps $g_{1}$, $g_{2}$, respectively, do not overlap.  Hence we have
\begin{align*}
&\iint_{B(\xi,r)\cap \Delta}\displaystyle
 \frac{|\mu(w)|^{2}}{1-|w|^{2}}dudv\leq \pi C^{*}\sum_{g\in G^{*}}length(g(B)\cap B(\xi,r)\cap \partial\Delta)\\&\leq \pi C^{*}length (B(\xi,r)\cap\partial\Delta)
\leq 2(\pi)^{2} C^{*}r,
\end{align*}
where $C^{*}$ equals to the maximum constants which appeared in the proof of this theorem and
$B=\bigcup_{i}^{n}(B_{i}\cap\Delta).$
Now the proof of the theorem is complete.
\end{proof}

 In\cite{Bi1}, Bishop showed that all divergence type Fuchsian groups hold Bowen's property, hence Theorem \ref{main} fails for the case of divergence-type groups. By Lemma \ref{le1} we know that for all convergence-type Fuchsian groups with compact support Beltrami coefficient, the discriminant method of Theorem \ref{main} also holds. It is natural to ask wether or not Theorem \ref{main} holds for all convergence-type Fuchsian groups.

\section{ Acknowledgements}

It is my pleasure to thank professor Michel Zinsmeister for inviting me to
the University of Orleans as a visiting scholar for one year and for some discussions on topics related to this paper. This work was done during my visiting Orleans. The author would also like to thank China Scholar Council for supporting my life in Orleans.

\end{document}